\algnewcommand{\Inputs}[1]{% \State \textbf{Inputs:} \Statex
\hspace*{\algorithmicindent}\parbox[t]{.8\linewidth}{\raggedright #1} }
\algnewcommand{\Initialize}[1]{% \State \textbf{Initialize:} \Statex
\hspace*{\algorithmicindent}\parbox[t]{.8\linewidth}{\raggedright #1} }
\def\BState{\State\hskip-\ALG@thistlm} \makeatother
\newtheorem*{theorem*}{Theorem} 
\newtheorem{theorem}{Theorem}
\newtheorem{lemma}{Lemma} 
\newtheorem{remark}{Remark}
\begin{document}

\title[Quasi-positivity in free groups]{Quasi-positivity and recognition
of products of conjugacy classes in free groups} 

\author{Robert W. Bell}

\address{Lyman Briggs College \& Department of Mathematics, Michigan
State University, East Lansing, MI} 

\email{rbell@math.msu.edu}

\author{Rita Gitik} 

\address{Department of Mathematics, University of
Michigan, Ann Arbor, MI} 

\email{ritagtk@umich.edu}

\date{29 January 2019}

\keywords{free group, quasi-positive, product of conjugates, complexity
of algorithms}

\subjclass{MSC 2010 20E05, 20E45, 20F10, 68Q25}

\begin{abstract} 

Given a group $G$ and a subset $X \subset G$, an element $g \in G$ is
called quasi-positive if it is equal to a product of conjugates of
elements in the semigroup generated by $X$.  This notion is important in
the context of braid groups, where it has been shown that the closure of
quasi-positive braids coincides with the geometrically defined class of
$\mathbb{C}$-transverse links.  We describe an algorithm that recognizes
whether or not an element of a free group is quasi-positive with respect
to a basis.  Spherical cancellation diagrams over free groups are used
to establish the validity of the algorithm and to determine the
worst-case runtime.  

\end{abstract}

\maketitle

\section{Introduction} 

A natural topological question is whether a given
link $L$ in 3-dimensional Euclidean space is equivalent to the closure
of a braid in which all of the strands cross in a positive sense.  Such
a braid is called a positive braid.  Analogously, one might study when a
link is equivalent to the closure of a product of conjugacy classes of
positive braids; such a link is called quasi-positive (see, for
instance, \cite{Abe_Tagami_MR3715009},
\cite{Etnyre_VanHornMorris_MR3609913},
\cite{Feller_Krcatovich_MR3694648}, \cite{Hayden_MR3788795},
\cite{Orevkov_MR3283750}, \cite{Orevkov_MR3444041},
\cite{Orevkov_MR3442790}, \cite{Orevkov_MR3420365},
\cite{Rudolph_MR683760}, \cite{Rudolph_MR1167166}, and
\cite{Stoimenow_MR3729301}).

Quasi-positivity has a striking geometric interpretation: Rudolph proved
that $\mathbb{C}$-transverse links, i.e. links which are the
intersection of a complex curve and a sphere in $\mathbb{C}^2$
transverse to the given curve, are quasi-positive;  and Boileau and
Orevkov proved the converse.  (See section 3.2.2 of
\cite{Etnyre_VanHornMorris_MR3609913} for references and further
discussion.)   

From an algebraic perspective, an element of the braid group is
quasi-positive if and only if it belongs to the semigroup that is
normally generated by the standard braid generators.

\subsection{Quasi-positivity in abstract groups.}

Quasi-positivity is a condition on elements of a group $G$ with a
distinguished subset $P$ consisting of a priori positive elements.  An
element $g \in G$ is defined to be quasi-positive if it is equal in $G$
to a product of conjugates of elements of $P$.  For our purposes, it is
convenient to regard the identity element as a quasi-positive element:
thus we call an element quasi-positive if it belongs to the normal
closure of the monoid generated by $P$.  

It seems natural to investigate quasi-positivity when $G = F(X)$ is a 
free group and $P = X$ is a basis.  Hence, we ask the following: 

\begin{quote} Can one decide whether or not a word over an alphabet $X$ 
represents a quasi-positive element of the free group $F(X)$?  
\end{quote}

Orevkov provided a positive answer to this question in his work on
quasi-positivity in the three strand braid
group~\cite{Orevkov_MR2056762}.  Orevkov also attributes a solution to 
this problem to Po\'{e}naru, as communicated by Blank
in~\cite{Poenaru_MR1610469}. 

In this article, we give a different algorithmic solution to this
question.  Our methods are topological and both extend and complement
the work of Orevkov.  We compare our methods to Orevkov's in
Section~\ref{S:OrevkovMethod}.  

We hope that this investigation may be useful in the study of
quasi-positivity in braid groups.  An immediate application is the
following: if $g \in F(x_1, \dots, x_{n-1})$ is quasi-positive, then $g$
maps to a quasi-positive element of the $n$-strand braid group under the
natural map that sends $x_i$ to the element $\sigma_i$ that braids the
$(i+1)$-st strand over the $i$-th strand.

\subsection{Products of conjugacy classes in free groups.}

Gersten~\cite{Gersten_MR840821} studied the following problem:  

\begin{quote} Given a sequence $w_1, \dots, w_k \in F(X)$, when do there
exist $c_1, \dots, c_k \in F(X)$ such that $w_1^{c_1} \cdots w_k^{c_k} =
1$?  \end{quote}

Gersten reformulates this question in terms of the topology of surfaces
as follows.  Since the sequence $w_1, \dots, w_k$ involves only
finitely many letters, we may suppose that $X$ is finite, say $X =
\{x_1, \dots, x_n\}$.  Let $M$ be an oriented $2$-sphere with $k$
mutually disjoint open disks removed.  Let $\Gamma_1, \dots, \Gamma_k$
be the oriented boundary circles of $M$.  For each $i = 1, \dots, k$,
subdivide $\Gamma_i$ by choosing $|w_i|$ points as vertices, where $|a|$
denotes the number of letters in the word $a$.  Choose one vertex to be
the initial vertex on $\Gamma_i$, and then label the initial vertex by
the first letter of $w_i$, the next vertex (in the order determined by
the orientation) by the next letter of $w_i$, and so forth.  For
instance, if $w_1 = x_2 x_1^{-1} x_2^{-1}x_1$, then $\Gamma_1$ is 
subdivided into four arcs by choosing four vertices with labels $x_2$, 
$x_1^{-1}$, $x_2^{-1}$, and $x_1$, respectively.
Gersten's theorem can now be stated as follows:

\begin{theorem*}[Gersten, Theorem 5.2 in~\cite{Gersten_MR840821}]
There exist $c_1, \dots, c_k \in F(X)$ such that $w_1^{c_1} \cdots
w_k^{c_k} = 1$ if and only if there exist pairwise disjoint properly
embedded tame arcs in $M$ which join vertices labeled by inverse
letters, i.e. one labeled by $x$ and one by $x^{-1}$ for some $x \in X$,
and such that every vertex is joined by an arc to exactly one other
vertex.  \end{theorem*}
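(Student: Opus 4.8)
The plan is to pass to the standard topological dictionary that relates products of conjugates to maps of surfaces, and then to obtain (respectively, construct) the arc system by a transversality argument.

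First I would fix the two spaces. Let $R = \bigvee_{j=1}^{n} S^{1}_{j}$ be a wedge of $n$ circles, the petal $S^{1}_{j}$ corresponding to $x_{j}$, so that $\pi_{1}(R) = F(X)$; note $R$ is a $K(F(X),1)$. The surface $M$ is a $2$-sphere with $k$ open disks removed, and after choosing a basepoint with arcs running to the boundary circles one has $\pi_{1}(M) \cong \langle \gamma_{1}, \dots, \gamma_{k} \mid \gamma_{1} \cdots \gamma_{k} = 1 \rangle$, where $\gamma_{i}$ is carried by $\Gamma_{i}$ with its given orientation. The dictionary then reads: there exist $c_{1}, \dots, c_{k} \in F(X)$ with $w_{1}^{c_{1}} \cdots w_{k}^{c_{k}} = 1$ if and only if there is a continuous map $f \colon M \to R$ whose restriction $f|_{\Gamma_{i}}$ is freely homotopic in $R$ to the loop that reads off the word $w_{i}$. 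Indeed, from such $c_{i}$ one gets a homomorphism $\pi_{1}(M) \to F(X)$ sending $\gamma_{i}$ to a conjugate of $w_{i}$ (well defined precisely because the product of conjugates is trivial), hence a map $f$ since $R$ is aspherical; conversely $f_{*}$ applied to the relation $\gamma_{1}\cdots\gamma_{k}=1$ forces a product of conjugates of the $w_{i}$ to be trivial, and the conjugators are read off from the chosen arcs to the boundary.

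For the forward direction, I would mark a point $p_{j}$ in the interior of each petal $S^{1}_{j}$, set $P = \{p_{1}, \dots, p_{n}\}$, and homotope $f$ so that it is transverse to $P$ and so that $f|_{\Gamma_{i}}$ meets $P$ exactly at the chosen vertices, crossing $p_{j}$ positively at a vertex labeled $x_{j}$ and negatively at a vertex labeled $x_{j}^{-1}$; this is possible since $f|_{\Gamma_{i}}$ already reads $w_{i}$ up to homotopy. Then $\Phi = f^{-1}(P)$ is a compact properly embedded $1$-submanifold of $M$, that is, a disjoint union of circles and tame arcs, with the arcs having their endpoints precisely at the vertices of $\partial M$. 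A circle component of $\Phi$ lies in a single fiber $f^{-1}(p_{j})$ with a neighborhood mapping into $S^{1}_{j}$; since $R \setminus P$ is contractible (it deformation retracts to the wedge point), a standard innermost-component argument removes all circle components by a homotopy of $f$ rel $\partial M$, leaving only arcs. Each remaining arc lies in some $f^{-1}(p_{j})$, so both endpoints are labeled by $x_{j}^{\pm 1}$, and the two endpoints carry inverse letters because the arc locally separates the preimages of the two sides of $p_{j}$ and so is crossed by $\partial M$ with opposite signs at its two ends; and every vertex, being a transverse intersection point of $f|_{\partial M}$ with $P$, is the endpoint of exactly one component of $\Phi$. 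This is the required arc system. For the converse I would run the construction backward: given disjoint properly embedded arcs pairing every vertex with a vertex labeled by the inverse letter, choose disjoint collar neighborhoods $\alpha \times (-1,1)$ of the arcs (they exist by tameness), map such a collar over an arc $\alpha$ joining vertices labeled $x_{j}^{\varepsilon}$ and $x_{j}^{-\varepsilon}$ onto a neighborhood of $p_{j}$ in $S^{1}_{j}$ by projection — orienting the collar so that the induced crossing signs match the vertex labels — and map everything else into the contractible space $R \setminus P$, which is possible since the already-defined map on the boundary of the complementary subsurface extends over that subsurface when the target is contractible. The resulting $f \colon M \to R$ restricts on each $\Gamma_{i}$ to a loop reading $w_{i}$, so the dictionary of the first step yields the desired $c_{i}$.

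The part I expect to be the \emph{main obstacle} is the orientation bookkeeping that makes the phrase ``inverse letters'' do exactly the right work in both directions: in the forward direction one must verify that the two ends of an arc really are labeled by a letter and its inverse (rather than by the same letter), and in the converse one must check that this condition is exactly what permits the collar maps to be oriented consistently all the way around each boundary circle. The removal of circle components also needs a little care, since $M$ is not simply connected and an innermost circle in $M$ need not bound a disk in $M$; the argument must be run using contractibility of $R \setminus P$ in the target rather than of a subsurface of $M$. The remaining ingredients — transversality, existence of collars, and extension over the complement — are routine.
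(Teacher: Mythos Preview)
The paper does not actually prove this statement. It is quoted as Gersten's theorem, and immediately afterward the authors write only that ``Gersten outlines a proof of this theorem and states that this theorem also follows from work of Culler and Goldstein--Turner.'' The theorem is then used as a black box throughout the rest of the paper. So there is no in-paper argument to compare your proposal against.

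That said, your proposal is essentially the Culler/Goldstein--Turner transversality argument that the paper alludes to, and it is broadly correct. A couple of comments on the points you yourself flag. For the orientation bookkeeping: since $M$ is oriented and $f$ is transverse to $p_j$, the $1$-submanifold $f^{-1}(p_j)$ is co-oriented in $M$; along an arc component this co-orientation must agree with the crossing direction of $\partial M$ at both endpoints, and comparing with the boundary orientation of $M$ forces the two endpoint labels to be inverse to one another --- this is exactly the statement you want, and it goes through cleanly. For the removal of circle components: your phrasing ``run the argument using contractibility of $R\setminus P$ in the target'' is not quite the right mechanism, because a tubular neighbourhood of a circle component maps across $p_j$ with a well-defined sign and cannot be pushed off by a homotopy supported only near the circle. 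The clean fix is the one you almost state: in a planar surface every simple closed curve separates, and an innermost circle component bounds a disk $D$ on one side (boundary circles of $M$ cannot lie in $D$ since they carry vertices, at least when the $w_i$ are nonempty; empty $w_i$ are handled trivially). Then $f|_{\partial D}$ is constant and $\pi_2(R)=0$ lets you homotope $f|_D$ rel boundary to a constant, after which the circle can be absorbed. With that adjustment your argument is complete.
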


A spherical diagram of the type described in the theorem is called a
spherical cancellation diagram for $w_1, \dots, w_k$ over $F(X)$.
Gersten outlines a proof of this theorem and states that this theorem
also follows from work of Culler~\cite{Culler_MR605653} and
Goldstein--Turner~\cite{Goldstein_Turner_MR521516}.  

We will show how to apply Gersten's theorem to give an algorithmic
solution to the problem of determining whether or not a given word is
quasi-positive in $F(X)$.

\section{Quasi-positivity Algorithms for Free Groups}

Let $F(X)$ be the free group with basis $X$.  Let $Y = X \cup X^{-1}$,
and let $Y^*$ be the free monoid on $Y$.  The natural surjection $Y^*
\to F(X)$ is defined by freely reducing, i.e. by removing subwords of
the form $xx^{-1}$ or $x^{-1}x$ for some $x \in X$.  An element of
$F(X)$ is positive if belongs to the positive semigroup $F^+(X)$ consisting
of nonempty reduced words with no occurrence of a letter in $X^{-1}$.
By convention, conjugation is denoted by $a^b =b^{-1} a b$.  As in the
introduction, we define an element $w \in F(X)$ to be quasi-positive if
it admits a factorization of the form $w = w_1^{c_1} \cdots w_k^{c_k}$
such that every $w_i$ is positive.  To simplify the algorithms and the
statements of our results, we also call the identity element a
quasi-positive element.  Since recognizing whether or not a word over $X$
represents the identity element can be solved in linear time, this does
not change the complexity of recognizing quasi-positivity.

The starting point of our investigation is the following:

\begin{lemma} \label{L:abelianize} If $w \in F(X)$ is quasi-positive,
then there exists a factorization of the form $w = x_{i_1}^{c_1} \cdots
x_{i_k}^{c_k}$, where each $x_{i_j} \in X$.  Moreover, if $\Theta: F(X)
\to \mathbb{Z}^X$ is the abelianization map, then the number of
$x_{i_j}$ that is equal to some $x \in X$ is equal to the $x$-coordinate
of $\Theta(w)$.  \end{lemma}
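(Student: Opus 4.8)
The plan is to start from the definition of quasi-positivity and obtain the desired factorization by two elementary reductions: first replacing each positive word by a product of single generators, then abelianizing.

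First I would take a witness factorization $w = w_1^{c_1}\cdots w_m^{c_m}$ with each $w_i \in F^+(X)$; when $w = 1$ I simply use the empty product $m = 0$, which is already of the required form and has $\Theta(w) = 0$. Writing each positive word as a string of basis elements, $w_i = x_{j_1}\cdots x_{j_{\ell_i}}$ with every $x_{j_r}\in X$ — this is possible precisely because a positive word contains no letter from $X^{-1}$ — I would then invoke the identity $(ab)^c = a^c b^c$, valid in any group, together with induction on word length, to get $w_i^{c_i} = x_{j_1}^{c_i}\cdots x_{j_{\ell_i}}^{c_i}$. Substituting these expressions into the original factorization and relabelling produces $w = x_{i_1}^{c_1}\cdots x_{i_k}^{c_k}$ with every $x_{i_j}\in X$, where $k = \sum_{i=1}^m \ell_i = \sum_{i=1}^m |w_i|$ and the resulting conjugators need not be distinct (which the statement permits). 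This establishes the first assertion.

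For the ``moreover'' clause I would apply the abelianization homomorphism $\Theta\colon F(X)\to\mathbb{Z}^X$. Since the target is abelian, $\Theta(x^{c}) = \Theta(c^{-1} x c) = \Theta(x)$ for every $x\in X$ and $c\in F(X)$, so $\Theta(w) = \sum_{j=1}^k \Theta(x_{i_j}) = \sum_{j=1}^k e_{i_j}$, where $e_x$ is the standard generator of $\mathbb{Z}^X$ corresponding to $x\in X$. Reading off the $x$-coordinate gives $(\Theta(w))_x = \#\{\, j : x_{i_j} = x \,\}$, which is exactly the claim.

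There is no serious obstacle here: the only points requiring any care are the bookkeeping of indices when each positive $w_i$ is broken into its constituent letters, and the boundary case $w = 1$, where one must recall that the empty product is admissible under our convention. The remaining content is just the homomorphism property of $\Theta$ and the group identity $(ab)^c = a^c b^c$.
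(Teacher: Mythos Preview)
Your proof is correct and follows essentially the same approach as the paper's own argument: both start from a witness factorization with positive bases, use the identity $(ab)^c = a^c b^c$ to split each positive word into conjugates of single generators, and then observe that $\Theta(x^{c}) = \Theta(x)$ to count the occurrences of each $x\in X$. Your version simply spells out the bookkeeping (indexing, the empty-product case) in slightly more detail.
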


\begin{proof} If $w$ is quasi-positive, then $w$ is equal to a (possibly
empty) word of the form $w_1^{c_1} \cdots w_m^{c_m} \in F(X)$ such that
every $w_i \in F^+(X)$. Using the property that $(ab)^c = a^cb^c$, we
can expand the expression until all of the bases of the exponents are
positive letters.  To see that the number of factors is computed by
$\Theta$, observe that $\Theta(x_{i_j}^{c_j}) = \Theta(x_{i_j})$ adds
one to the $x$-coordinate of $\Theta(w)$ if and only if $x_{i_j}= x$.
\end{proof}

We relate this to Gersten's theorem as follows.  Suppose that $w$ is
quasi-positive.  By Lemma~\ref{L:abelianize}, there exist $x_{i_1},
\dots, x_{i_k} \in X$ and $c_1, \dots c_k \in F(X)$ such that \[w
\cdot (x_{i_k}^{-1})^{c_1} \cdots (x_{i_1}^{-1})^{c_k} = 1.\] 
By Gersten's theorem, there is a spherical cancellation diagram with
$k+1$ boundary circles: one circle is subdivided into $|w|$ arcs and
vertices and these vertices are labeled by the letters in $w$, and the
other $k$ circles each have a single vertex labeled by their respective
negative letter $x_{i_j}^{-1}$.  We refer to the letters $x_{i_1} \dots,
x_{i_k} \in X$ in such a factorization as the base letters.  We will see
in subsection~\ref{S:not_unique} that a quasi-positive element may
admit more than one factorization as a product of conjugates of positive
letters; however, the number and type of the base letters recorded by
$\Theta(w)$ must be the same.

Conversely, we deduce from Gersten's theorem and
Lemma~\ref{L:abelianize} that the question of whether or not a given $w
\in F(X)$ is quasi-positive is equivalent to whether or not there is a
spherical cancellation diagram for at least one sequence of the form
$w,x_{i_1}, \dots, x_{i_k}$, where $k = |\Theta(w)|$, the sum of the
coordinates of $\Theta(w)$, and the number
of $x_{i_j}$ equal to a given $x \in  X$ is the $x$-coordinate of
$\Theta(w)$.  Note that there are only finitely many such sequences. 

It is not immediately clear how to decide for a given $w \in F(X)$
whether or not such a cancellation diagram exists.  The content of
Algorithm~\ref{TestQP} ({\scshape TestQP}) is an effective and algebraic
procedure for deciding whether or not such a diagram exists.  The
content of Algorithm~\ref{FactorQP} ({\scshape FactorQP}) is an
effective and algebraic procedure for writing a word $w$ as a product of
conjugates of positive letters if {\scshape TestQP} has determined that
$w$ is quasi-positive.   The validity of both algorithms is established
in Theorem~\ref{T:algorithms}.

Before describing these algorithms (and later investigating their
computational complexity), we give a few examples to illustrate some
of the difficulties that need to be addressed.

\subsection{First example: a quasi-positive element might be disguised.}

Let $X = \{a,b\}$ and consider the following word over $X$: 

\[b \cdot aba^{-1} \cdot a^2 b^{-1} a^{-1} b a b a^{-2}.\]

We have chosen a word which represents a quasi-positive element of
$F(X)$.  But, if we freely reduce this word and call the corresponding
element $w$, then the fact that $w \in F(X)$ is quasi-positive may be
less apparent: \[w = babab^{-1}a^{-1}baba^{-2}.\]

Since $\Theta(w) = (0,3)$, any factorization of $w$ as a product of
conjugates of letters in $X$ must consist of 3 base letters all of which
are equal to $b$.  Topologically, there must exist a cancellation
diagram for $(w, b, b, b)$.  We invite the reader to find such a diagram
on sphere with 4 boundary circles, one labeled by $w$ and the others
each labeled by the letter $b^{-1}$.

\subsection{Second example: quasi-positive factorizations are not
unique.}  \label{S:not_unique} Let $X = \{a,b\}$ and consider the following word over $X$:
\[a \cdot (bab^{-1}) \cdot b \cdot (ba^{-1} bab^{-1}).\] 

If we freely reduce this word and call the corresponding element $w$,
then the fact that $w \in F(X)$ is quasi-positive is more apparent than
in the previous example: \[w = ababa^{-1}bab^{-1}.\]

We find that $\Theta(w) = (2,2)$.  So, any factorization of $w$ as a
product of conjugates of letters in $X$ must consist of 4 base letters:
two of which are $a$ and two of which are $b$.  Topologically, there
must exist a cancellation diagram for $(w,a, a, b, b)$.  We again invite
the reader to find such a diagram on a sphere with 5 boundary circles,
one labeled by $w$ and the others each labeled by the letters $a^{-1}$,
$a^{-1}$, $b^{-1}$, and $b^{-1}$, respectively.  Note that a solution is
not unique.  Indeed, the following are different factorizations of $w$
as a product of conjugates of positive letters: \[w = a \cdot b \cdot
aba^{-1} \cdot bab^{-1} = a \cdot b \cdot a \cdot ba^{-1}bab^{-1}.\]

\subsection{Third example: the abelianization is not a complete
invariant.}  \label{third_abelianization_not_complete}  If $w \in F(X)$
is quasi-positive, then $\Theta(w)$ must have non-negative coordinates.
But the converse is not true, as the following example shows: let $w =
[a,b]b = a^{-1}b^{-1}abb$.  We have that $\Theta(w) = (0,1)$, but there
is no spherical cancellation diagram for $w$ with two boundary circles
such that one is labeled by $w$ and the other by $b^{-1}$.  

\subsection{The Test for Quasi-Positivity algorithm.}

The algorithm {\scshape TestQP}$(w)$ (described below in
Algorithm~\ref{TestQP}) accepts a word $w$ as input and returns true if
$w$ is quasi-positive and false if it is not.  The reader is advised to
read this algorithm in tandem with the example in
subsection~\ref{fourth_example}.  The algorithm can be modified so that
if $w$ is quasi-positive, then it also returns a labelled binary tree
$T$ that encodes how it was determined that $w$ is quasi-positive.  This
tree can then be used as input for the algorithm {\scshape
FactorQP}$(T)$, and a factorization of $w$ as a product of conjugates of
positive letters will be returned.  A detailed example of this
procedure appears in subsection~\ref{factor_example}.

\begin{algorithm} \caption{Test for Quasi-Positivity} \label{TestQP}

\begin{algorithmic}[1] 

\Procedure{TestQP}{$w$} 
\Comment{The input word $w = w_1 \cdots w_n$ is stored in a list so that
$w[i] = w_i$ and each $w_i \in X \cup X^{-1}$.  We use the notation
$w[i:j] = [w_i, \dots, w_{j-1}]$ for $1 \leq i < j \leq n$.}

\For {$i \gets 1$ to $n$} \Comment{Find the initial negative letter.}

	\If {$(w[i] \in X^{-1})$} \State ${\rm double} \gets ww$
\Comment{Double $w$ to simulate a cyclic word.  Here $ww$ denotes the
concatenation of the list $w$ with itself.}

		\For {$j \gets 1$ to $n-1$}
\Comment{Search for a matching positive letter $x \in X$ that is the
inverse of the negative letter $w[i] = x^{-1}$.} 

			\If {$({\rm double}[i+j] = w[i]^{-1})$}
\Comment{The pair of matching inverse letters $x^{-1}, x$ subdivides 
the cyclic word into two subwords: the word $w_L$ to the Left of $x^{-1}$
and to the right of $x$ and the word $w_R$ to the Right of $x^{-1}$ and
to the left of $x$.  In a cyclic word, these will wrap around.}

					\State $w_L \gets {\rm
double}[i+1:i+j]$ 

					\State $w_R \gets {\rm
double}[i+j+1: n+i]$ 

					\If {({\scshape TestQP}($w_L$)
\, {\bf and} \, {\scshape TestQP} ($w_R$)) } 
\Comment{Recursively test $w_L$ and $w_R$.  If both are quasi-positive,
then the matching inverse letter is ``good''.  If either $w_L$
or $w_R$ is not quasi-positive, then lines 11 and 12 will be called and 
the FOR loop on line 5 will increment $j$ and continue to search for a 
good matching inverse letter.}
						\State {\bf return true}
					\EndIf 
			\EndIf 
		\EndFor \Comment{If line 13 is called, then there is no 
good matching inverse letter for the negative letter $x^{-1}$.  By
Lemma~\ref{L:one_negative_letter_suffices}, we deduce that $w$ is not
quasi-positive.} 
		\State {\bf return false} 
	\EndIf 
\EndFor \Comment{If line 16 is called, then $w$ is positive or empty.} 

\State {\bf return true}

\EndProcedure 

\end{algorithmic} 

\end{algorithm}

\begin{algorithm} \caption{Factor Quasi-Positive element}
\label{FactorQP} \begin{algorithmic}[1] \Procedure{FactorQP}{$Tree$}
\Comment{The input $Tree$ is a rooted binary tree.  Every degree one
vertex contains a positive word.  Every vertex of degree greater than
one contains a quasi-positive word $w$ and a positive letter $x$ coming
from good pair $x^{-1},x$ contained in $w$.  The two children of this
vertex contain the words $w_L$ and $w_R$, respectively.}

\State Number the vertices of $Tree$ by $1, 2, \dots$ using a Breadth
First Search.  \State {\bf repeat} 

\State Find vertices (necessarily of degree one) with the two largest
numbers.  Let $w_L$ and $w_R$ be the words they contain.  

\State Let $w$ and $x$ denote the word and positive letter,
respectively, contained in the parent vertex of $w_L$ and $w_R$.

\State Since the tree was constructed from a call to {\scshape
TestQP}$(w)$, we know that $w$ and $x^{-1} w_L x w_R$ are equal as
cyclic words.  Cyclically permute the letters of $x^{-1} w_L x w_R$
until it equals $w$ letter for letter.

\State Using one of the cases of Lemma~\ref{L:cyclingPCP}, write $w$ as
a product of conjugates of positive letters.  Overwrite the contents of
the parent vertex with this factorization.

\State Delete both children of the parent node.

\State Repeat using the next pair of vertices with the largest two
numbers.

\State {\bf until} all nodes visited

\State {\bf return} the factorization of $w$ contained in the root
vertex

\EndProcedure

\end{algorithmic} \end{algorithm}

\subsection{Fourth example: the {\scshape TestQP} algorithm in
practice.} \label{fourth_example} Consider the following element of
$F(X)$: \[w = babab^{-1}a^{-1}baba^{-1}a^{-1} = w_1 w_2 \cdots w_{10}
w_{11},\] where $X = \{a,b\}$ and each $w_i$ is a letter, i.e. an
element of $Y = X \cup X^{-1}$.  We describe the steps that result from
calling {\scshape TestQP}$(w)$:

%Note that the initial letter of $w$ is denoted by $w_0$ to conform with
%the practice that lists and arrays are indexed starting with zero.  

\begin{enumerate}

\item The procedure (line 1) is called with input $[w_1, \dots,
w_{11}]$.  The initial negative letter $w_5 = b^{-1}$ is found (line 2).
The list \[{\rm double} = [w_1, \dots, w_{11}, w_1, \dots, w_{11}] =
[d_1, \dots, d_{22}]\] is constructed to facilitate our need to regard 
$w$ as a cyclic word (without needing to speak of indices modulo $|w|$).

\item The next (starting with $w_6$) occurrence of the matching 
inverse letter is found: $w_7 = b$ is the inverse of $w_5$.  The
cyclic word $w$ is then split into two subwords by deleting the
pair $w_5, w_7$ of inverse letters: let $w_L = a^{-1}$ (the left
subword, meaning lying to the left of the initial negative letter)
and $w_R = aba^{-1}a^{-1}baba$ (the right subword, which wraps around
since $w$ is a cyclic word).  

\item Recursion is then used: the algorithm is called with input $w_L$
(line 9).  This returns false since, although a negative letter, namely
$a^{-1}$ is found in $w_L$, there is no matching inverse letter.  The
algorithm is not called on $w_R$ since the compiler will recognize that
the condition inside the IF statement in line 9 is false.

\item The FOR loop (line 5) finds the next matching inverse letter
(starting the search at $w_7$): $w_9 = b$.  As above, the cyclic word
$w$ is split into two subwords: let $w_L = a^{-1} b a$ and let 
$w_R = a^{-1} a^{-1} baba$.  The algorithm is called on $w_L$.  This
time, the letter $w_6 = a^{-1}$ is found, a matching inverse letter,
$w_8 = a$ is found and $w_L$ is split into two subwords: $w_{LL} = b$
and $w_{RL} = 1$ (the empty word).  Recursively, the algorithm is
called on $w_{LL}$ and true (line 16) since $w_{LL}$ is positive.
The same happens for $w_{RL}$ since it is empty. 

\item At this point, looking forward to constructing a factorization,
the following information will be recorded in a rooted binary tree (see
Figure~\ref{F:tree}).  The root will contain the word $w$ and the letter
$b$ (since $b^{-1}$ was found in $w$); the left child of the root will
contain the word $w_L$ and $a$ (since $a^{-1}$ was found in $w_L$).  The
two children of $w_L$ are vertices continuing $w_{LL} = b$ and $w_{RL} =
1$, respectively.  These steps are not included in the algorithm so that
the exposition is more clear.

\item Next, the algorithm is called on $w_R = a^{-1}a^{-1}baba$.  The
first negative letter, $w_{10} = a^{-1}$ is found.  The next matching
positive letter $w_2 = a$ is found; this results in the subwords $w_{LR} =
a^{-1}b$ (in line 7) and $w_{RR} = ba$.  When the algorithm is called on
$w_{LR}$ it will find no positive letter to match $a^{-1}$.  Therefore,
the algorithm will continue (line 5) to search $w_R$ for a good match
for the negative letter $a^{-1}$.  Indeed, $w_4 = a$ is a good match
since $w_{LR} = a^{-1}bab$ and $w_{RR} = 1$ are quasi-positive.  The 
negative letter $w_{11} = a^{-1}$ is found in $w_{LR}$ and matched
with $w_2 = a$ and the resulting subwords are $w_{LLR} = b$ and 
$w_{RLR} = b$.  These are both positive.  These two subwords and the 
letter $a$ are recorded the binary tree as shown in
Figure~\ref{F:tree}.

\item Finally, $w_{RR} = 1$ is recognized as empty.  Tracing
back through the recursive calls, we find that both conditions in the
test on line 9 are satisfied at each step.  The result is that the
algorithm will return true (line 10) for the input word $w$.

\end{enumerate}

{\bf Remark:}  The {\scshape TestQP} algorithm tests whether
or not the input word represents a quasi-positive element of a free
group.  We have chosen to regard a word that represents the identity as
quasi-positive since it simplifies the algorithm.  However, it is
straightforward to recognize whether or not a word represents the
identity and doing so will not change the worst-case runtime of the
algorithm.

\begin{figure} \begin{tikzpicture} \Tree
[.$w=babab^{-1}a^{-1}baba^{-1}a^{-1},b$ [.$w_L=a^{-1}ba,a$ [.$w_{LL}=b$
] [.$w_{RL}=1$ ] ] [.$w_R=a^{-1}a^{-1}baba,a$ [.$w_{LR}=a^{-1}bab,a$
[.$w_{LLR}=b$ ] [.$w_{RLR}=b$ ] ] [.$w_{RR}=1$ ] ] ] \end{tikzpicture}
\caption{The tree $T$ constructed in the process of verifying that $w =
babab^{-1}a^{-1}baba^{-1}a^{-1}$ is quasi-positive.  Each leaf of $T$
contains a positive word.  Other nodes contain a word and a positive
letter $x \in X$ corresponding to a good pair $x^{-1},x$ in $w$.}
\label{F:tree} \end{figure}
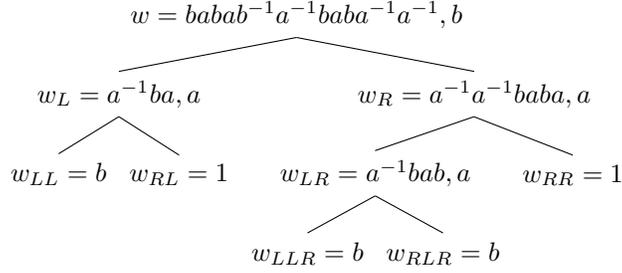

\subsection{Fifth example: the {\scshape FactorQP} algorithm in
practice.}  \label{factor_example} We step through a call of {\scshape
FactorQP}$(T)$, where $T$ is the binary tree in Figure~\ref{F:tree}
corresponding to the example in subsection~\ref{fourth_example}

\begin{enumerate}

\item The vertices of the tree $T$ are numbered from 1 (the root) to 9
(the vertex containing $w_{RLR} = b$ using a breadth first search (line
2).

\item The vertices with the two largest numbers, 8 and 9, contain
$w_{LLR} = b$ and $w_{RLR} = b$, respectively (line 4).

\item The parent vertex (with number 6) contains the word $w_{LR} =
a^{-1}bab$ and the letter $a$ (line 5).

\item We construct $a^{-1}w_{LLR}aw_{RLR} = a^{-1} bab$.  It so happens
that this is equal to $w_{LR}$ and so no cycling is required (line 6).

\item We write the factorization $a^{-1}ba \cdot b$ of $w_{LR}$ to the
vertex with the number 6 and delete the vertices with numbers 8 and 9
(lines 7 and 8).  By abuse of notation, we denote this factorization by
$w_{LR}$.

\item We repeat.  The next two largest numbered vertices are vertex 6
and vertex 7 which contain $w_{LR} = a^{-1}ba \cdot b$ and $w_{RR} = 1$,
respectively (line 5); their parent vertex contains $w_R$ and the letter
$a$.

\item As we step through lines 6--9, we find that \[a^{-1}w_{LR}aw_{RR}
=  a^{-1}(a^{-1}ba \cdot b) a \cdot 1\] is equal to $w_R$ letter for
letter (no cycling required); however expressing this as a product of
conjugates of positive letters requires some care, as detailed in
Lemma~\ref{L:cyclingPCP}.  The correct factorization is the following:
\[b^{aa} \cdot b^{a} = a^{-1}a^{-1}baa \cdot a^{-1}ba.\] This
factorization is written to vertex 3 and denoted by $w_R$.  

\item The next two highest numbered vertices contain $w_{LL} = b$ and
$w_{RL} = 1$, respectively.  Using the letter $a$ of the parent vertex,
we obtain the factorization $a^{-1}ba$ of $w_L$; no cycling is required.

\item Finally, we consider vertex 2 (containing the factorization
$a^{-1}ba = w_L$) and vertex 3 (containing the factorization
$a^{-1}a^{-1}baa \cdot a^{-1} b a = w_R$).  The parent vertex (the root)
contains the letter $b$ and so we try to match $w$ letter for letter
with the word $b^{-1} w_L b w_R$.  Cycling is required in this case.
Moreover, expressing $w$ as a product of conjugates of positive letters
requires some care.  We have that  \[b^{-1} w_L b w_R = b^{-1} (a^{-1}
ba) b \cdot (a^{-1}a^{-1}baa \cdot a^{-1}ba).\] After freely reducing
the above word, it is equal as a cyclic word to $w$.  We cyclically
permute the letters and obtain the following factorization as detailed
in Lemma~\ref{L:cyclingPCP}: \[b \cdot b^{a^{-1}} \cdot
b^{aba^{-1}a^{-1}}.\] The above factorization is the factorization of
$w$ as a product of conjugates of positive letters that is returned by
{\scshape}{FactorQP}$(w)$.

\end{enumerate}

\section{Validity of the algorithms}

\begin{lemma} \label{L:one_negative_letter_suffices} Suppose $w = y_1
\cdots y_{n}$, where each $y_i \in X \cup X^{-1}$.  If $w$ is
quasi-positive (or $w$ represents the identity) and some $y_i \in
X^{-1}$, then there exists a $y_j$ such that $y_j = y_i^{-1}$ and both
$w_L$ and $w_R$ are quasi-positive, where \[y_{i} \cdots y_n y_1 \cdots
y_{i-1} = y_iw_Ly_jw_R.\] \end{lemma}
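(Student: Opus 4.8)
The plan is to reduce to Gersten's theorem and argue combinatorially about arcs in a spherical cancellation diagram. Suppose $w = y_1 \cdots y_n$ is quasi-positive with $y_i \in X^{-1}$, say $y_i = x^{-1}$. By Lemma~\ref{L:abelianize} there is a factorization $w = x_{i_1}^{c_1} \cdots x_{i_k}^{c_k}$ with base letters recorded by $\Theta(w)$, so $w \cdot (x_{i_k}^{-1})^{c_k} \cdots (x_{i_1}^{-1})^{c_1} = 1$, and by Gersten's theorem there is a spherical cancellation diagram $D$ on a sphere with $k+1$ boundary circles: one circle $\Gamma_0$ subdivided into $n$ vertices labeled by $y_1, \dots, y_n$, and $k$ circles each carrying a single vertex labeled by some $x_{i_j}^{-1}$. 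Since $y_i$ is labeled $x^{-1}$, the arc of $D$ emanating from the vertex $v_i$ (corresponding to $y_i$) must terminate at a vertex labeled $x$. That vertex is either on $\Gamma_0$ (so it is some $y_j$ with $y_j = x = y_i^{-1}$) or it is the lone vertex on one of the small boundary circles labeled $x$ — but the small circles are labeled by \emph{negative} letters $x_{i_j}^{-1}$, so this second case cannot occur unless $x \in X^{-1}$, contrary to $x \in X$. Hence the arc from $v_i$ goes to some $v_j$ on $\Gamma_0$ with $y_j = y_i^{-1}$; this $y_j$ is our candidate.

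Next I would show that cutting $D$ along this single arc $\alpha$ from $v_i$ to $v_j$ splits the picture appropriately. Reading cyclically from $y_i$, the cyclic word becomes $y_i w_L y_j w_R$, where $w_L = y_{i+1} \cdots y_{j-1}$ and $w_R = y_{j+1} \cdots y_{i-1}$ (indices cyclic). The arc $\alpha$, together with the portion of $\Gamma_0$ it separates, partitions the remaining vertices of $\Gamma_0$ into the two blocks corresponding to $w_L$ and $w_R$. Because $D$ lies on a sphere and the arcs are pairwise disjoint and properly embedded, each remaining arc of $D$ has both endpoints in the \emph{same} block: an arc with one endpoint among the $w_L$-vertices and one among the $w_R$-vertices would have to cross $\alpha$. (This is the planarity/non-crossing argument; the two ``sides'' of $\alpha \cup (\text{an arc of }\Gamma_0)$ are topological disks.) Likewise each small boundary circle, being connected and disjoint from $\alpha$, lies entirely on one side, and its vertex is joined to a vertex on the same side.

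From this I would extract two sub-diagrams: removing $\alpha$ and the small circles that landed on the $w_R$-side, and capping off, yields a spherical cancellation diagram for the sequence consisting of $w_L$ together with the negative letters $x_{i_j}^{-1}$ whose circles lay on the $w_L$-side; symmetrically for $w_R$. Applying Gersten's theorem in the reverse direction to each sub-diagram gives that $w_L$ (times conjugates of the corresponding negative letters) equals $1$, i.e.\ $w_L$ is a product of conjugates of positive letters, hence quasi-positive; and similarly $w_R$ is quasi-positive. In the degenerate case where $w$ represents the identity the same argument applies with $k=0$ and no small circles. The main obstacle is making the ``capping off'' step rigorous: after deleting $\alpha$ one must check that the two resulting pieces are genuinely spheres-with-holes carrying valid cancellation diagrams (every vertex still matched to exactly one other by a properly embedded tame arc), which is where the planar topology of $D$ — and in particular the fact that no residual arc crosses the cut — does the real work.
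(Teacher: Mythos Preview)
Your proof is correct and follows essentially the same approach as the paper: obtain a spherical cancellation diagram via Gersten's theorem, observe that the arc from the vertex labeled $y_i = x^{-1}$ must terminate on another vertex of the $w$-curve (since the inner circles carry only negative labels), and cut along that arc to produce cancellation diagrams for $w_L$ and $w_R$. The paper's version is considerably terser---it simply asserts these points without your explicit justification of why the arc stays on $\Gamma_0$ or your planarity/separation discussion---but the underlying argument is identical.
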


\begin{proof} If $w$ is quasi-positive and $w_i \in X^{-1}$, then by
Gersten's theorem there is a spherical cancellation diagram with an arc
joining the vertex with label $y_i$ on the $w$-curve to a vertex with
label $y_i^{-1}$ on the $w$-curve.  If we cut this diagram along this
arc, we obtain two spherical cancellation diagrams: one for $w_L$ and
one for $w_R$.  Therefore, both $w_L$ and $w_R$ are quasi-positive.
\end{proof}

The significance of the above is that in Algorithm~\ref{TestQP}
({\scshape TestQP}), if a letter in $X^{-1}$ is read from the input word
$w$, then we can search for a matching inverse letter and run the
algorithm again on the two subwords $w_L$ and $w_R$.  If neither is
found to be quasi-positive, then we proceed to the next
inverse letter and repeat.  If all matching inverse letters are exhausted
and no pair of subword is quasi-positive, then we can conclude that $w$
is not quasi-positive, whereas a priori we might have to check the
next letter of $w$ in $X^{-1}$ to find a good pair of inverse letters.

\begin{theorem} \label{T:algorithms} 
\leavevmode

\begin{enumerate}

\item The algorithm {\scshape TestQP} correctly determines whether or
not an input word $w$ is quasi-positive.  

\item Suppose that $w$ is quasi-positive.  Let $T$ be the rooted binary
tree encoding the good pairs of inverse letters and corresponding
subwords returned by a modification of {\scshape TestQP} as described in
subsections~\ref{fourth_example} and \ref{factor_example}.  Then
{\scshape FactorQP}$(T)$ will return a factorization of $w$ as a product
of conjugates of positive letters.

\end{enumerate} \end{theorem}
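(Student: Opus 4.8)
The plan is to prove the two parts of Theorem~\ref{T:algorithms} separately, with part (1) reducing to Lemma~\ref{L:one_negative_letter_suffices} plus a termination argument, and part (2) reducing to the not-yet-stated Lemma~\ref{L:cyclingPCP} plus a bottom-up induction on the tree.

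\textbf{Part (1): correctness of {\scshape TestQP}.} First I would argue that the recursion terminates: each recursive call is made on a subword $w_L$ or $w_R$ that is obtained by deleting at least the two matched letters $y_i$ and $y_j$ from the cyclic word $w$, so $|w_L| + |w_R| \le |w| - 2$, and in particular both $|w_L|, |w_R| < |w|$; hence the total length strictly decreases along every branch and the recursion bottoms out at words of length $0$ or $1$. The base cases are handled correctly: a word with no letter in $X^{-1}$ is either empty or positive, hence quasi-positive, and the FOR loop on line~2 falls through to {\bf return true} on the last line. For the inductive step, suppose the algorithm is correct on all shorter inputs. If {\scshape TestQP}$(w)$ returns true via line~10, then it found a matching inverse pair $y_i, y_j$ for which {\scshape TestQP}$(w_L)$ and {\scshape TestQP}$(w_R)$ both returned true; by the inductive hypothesis $w_L$ and $w_R$ are quasi-positive, and then splicing their spherical cancellation diagrams along a new arc joining the $y_i$- and $y_j$-vertices produces a spherical cancellation diagram for the cyclic word $y_i w_L y_j w_R = w$ together with the appropriate extra single-vertex circles, so $w$ is quasi-positive by Gersten's theorem. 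Conversely, if {\scshape TestQP}$(w)$ returns false, it did so at line~13 after the FOR loop on line~5 exhausted every occurrence of $y_i^{-1}$ in the cyclic word without finding one for which both subwords tested as quasi-positive; by the inductive hypothesis ``tested as quasi-positive'' coincides with ``is quasi-positive'' for these shorter words, so no matching inverse letter for $y_i$ yields a pair of quasi-positive subwords, and Lemma~\ref{L:one_negative_letter_suffices} (contrapositive) forces $w$ to be not quasi-positive. The only subtlety is the compiler short-circuit noted in the algorithm's comments: if {\scshape TestQP}$(w_L)$ returns false then {\scshape TestQP}$(w_R)$ is never evaluated, but this does not affect the logical value of the conjunction on line~9, so the argument is unaffected.

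\textbf{Part (2): correctness of {\scshape FactorQP}.} Here I would induct on the tree $T$ from the leaves upward, the invariant being: after the node $v$ of $T$ has been processed (or, for a leaf, at the outset), the word stored at $v$ has been rewritten as an explicit product of conjugates of positive letters that is equal in $F(X)$ to the word originally stored at $v$. For a leaf this is immediate since a leaf stores a positive word, which is already (a single-term) such a product, or the empty word, which is the empty product. For an internal node $v$ with children storing words $w_L, w_R$ and with stored positive letter $x$, the breadth-first numbering guarantees that by the time we select the two largest-numbered remaining vertices we have already processed both children (they have strictly larger numbers than $v$ and than every ancestor of $v$), so by the inductive hypothesis the words currently sitting in the child vertices are products of conjugates of positive letters equal in $F(X)$ to the original $w_L$ and $w_R$. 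Since $T$ was built by {\scshape TestQP}, the cyclic word $w$ stored at $v$ equals $x^{-1} w_L x w_R$ as a cyclic word; line~6 undoes the cyclic permutation so that, after free reduction, $x^{-1} w_L x w_R$ equals $w$ letter for letter; and then Lemma~\ref{L:cyclingPCP} is invoked to convert $x^{-1} (\text{prod})\, x\, (\text{prod})$, possibly after a cyclic shift, into an honest product of conjugates of positive letters representing $w$. Propagating this to the root gives the desired factorization of the original $w$, which is what line~11 returns. Termination of the repeat-loop is clear: each pass deletes two leaves and converts their parent into a leaf, so the number of vertices drops by two each time until only the root remains.

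\textbf{Main obstacle.} The genuinely delicate point, and the one I expect to lean on most heavily, is the bookkeeping in part~(2) around Lemma~\ref{L:cyclingPCP}: one must check that a cyclic permutation of a product of conjugates of positive letters can again be written as such a product (conjugating the ``wrapped'' prefix by the permuting word), and that the free reductions occurring when one substitutes $x^{-1}(\cdots)x(\cdots)$ and cancels do not destroy positivity of the base letters — only the conjugators change. The fifth example in subsection~\ref{factor_example} shows exactly the phenomenon: the naive expression $b^{-1}(a^{-1}ba)b\cdot(a^{-1}a^{-1}baa\cdot a^{-1}ba)$ must be massaged into $b\cdot b^{a^{-1}}\cdot b^{aba^{-1}a^{-1}}$. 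Provided Lemma~\ref{L:cyclingPCP} packages these two operations (cyclic re-association and conjugator-tracking through free reduction) cleanly, the induction in part~(2) goes through with only routine verification, and part~(1) is essentially a formal consequence of Lemma~\ref{L:one_negative_letter_suffices} and the length-decrease observation.
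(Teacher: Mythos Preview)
Your proposal is correct and follows essentially the same approach as the paper: both directions of part~(1) are established by induction on $|w|$ using Gersten's theorem to glue/cut spherical cancellation diagrams, with Lemma~\ref{L:one_negative_letter_suffices} supplying the ``returns false $\Rightarrow$ not quasi-positive'' direction, and part~(2) is reduced to Lemma~\ref{L:cyclingPCP} by an induction up the tree. If anything, your write-up is more explicit than the paper's on termination and on why the BFS numbering guarantees children are processed before their parent, but the underlying argument is the same.
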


\begin{proof} If $w$ is quasi-positive, then there exists a cancellation
diagram with one boundary cycle labeled by $w$ and the remaining cycles
each labeled by one letter in $X^{-1}$.  As described in
Lemma~\ref{L:abelianize}, the number of these cycles and the number
having a given negative letter is determined by the abelianization
$\Theta(w)$.  We argue that the algorithm {\scshape TestQP} recognizes
that $w$ is quasi-positive and that the algorithm {\scshape FactorQP}
returns a factorization as a product of conjugates of positive letters.

The argument is by induction on the length of $w$.  The statement is
vacuously true for the trivial word.  Suppose $w$ has length $L > 0$.
If the cancellation diagram has no arcs joining vertices of the boundary
cycle labeled by $w$ to itself, then $w$ is positive and this is
detected by line 16 of Algorithm~\ref{TestQP}.  Otherwise there are
vertices labeled by $x$ and $x^{-1}$ for some $x \in X$, and there is an
arc in the cancellation diagram joining $x$ to $x^{-1}$.  The word $w$
admits a factorization as $w = t x^{-1} u x v$.  By cutting along this
arc we obtain two spherical cancellation diagrams, one for the word $u$ and
one for the word $vt$.  Since $u$ and $vt$ are shorter quasi-positive
words (as in the proof of Lemma~\ref{L:one_negative_letter_suffices}),
by induction, the algorithms correctly identify them as quasi-positive
and return factorizations of the desired type.  Lemma~\ref{L:cyclingPCP}
contains further explanation of how the factorization is obtained from
the recursion.

Conversely, we argue that the algorithm {\scshape TestQP} can be used to
construct a spherical cancellation diagram.  Suppose that the algorithm
returns true for an input word $w$.  If $w$ is positive, it is clear
that a spherical cancellation diagram exists.  Otherwise, {\scshape
TestQP} find a good pair of inverse letters $x^{-1}, x$.  Join the
corresponding vertices by a proper tame arc on a sphere with
$|\Theta(w)| + 1$  boundary circles of the correct type as discussed
previously.  By induction on the length of the input, spherical
cancellation diagrams exist for the subwords $w_L$ and $w_R$
corresponding to this pair.  These spherical diagrams can be glued to
form a spherical cancellation diagram for $w$.  By Gersten's theorem,
$w$ is quasi-positive.   This proves that {\scshape TestQP} correctly
identifies whether or not an input word is quasi-positive.  \end{proof}

We saw in subsection~\ref{fourth_example} that finding a factorization
of the input word as a product of conjugates of positive words, is
delicate.  The following lemma explains how Algorithm~\ref{FactorQP}
({\scshape FactorQP}) finds such a factorization of $w$ given that $w$
contains a good pair $a, a^{-1}$ of inverse letters and quasi-positive
subwords $w_L$ and $w_R$ with given factorizations as a product of
conjugates of positive elements.

\begin{lemma} \label{L:cyclingPCP} Let $F(X)$ be free with basis $X$.
Let $w \in F(X)$.  Let $\bar{w}$ be the cyclic reduction of $w$ so that
$w = v^{-1} \bar{w} v$ and $2|v| + |\bar{w}| = |w|$ and $v \in F(X)$.
Suppose that $w_L$ and $w_R$ are words over $X$, each expressed as a
product of conjugates of positive letters.  Let $a \in X$.  Suppose that
$\bar{w}$ and $w_L^a w_R$ are conjugate in $F(X)$ via some $u \in F(X)$:
\[\bar{w} = (w_L^a w_R)^u.\] Then \[w = \bar{w}^v = w_L^{auv} \cdot
w_R^{uv}\] is a factorization of $w$ as a product of conjugates of
positive letters.  Moreover, elements $u$ and $v$ can be effectively
determined from the words $w$ and $a^{-1}w_La w_R$ and the assumption
that these two words define conjugate elements in $F(X)$.  \end{lemma}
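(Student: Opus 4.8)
The plan is to establish the algebraic identity first and then address the effectivity claim. For the identity, I would start from the hypothesis $\bar{w} = (w_L^a w_R)^u$ and simply conjugate both sides by $v$, using $w = \bar{w}^v$, so that $w = (w_L^a w_R)^{uv}$. Then I would apply the homomorphism property of conjugation, namely $(gh)^c = g^c h^c$ together with $(g^a)^c = g^{ac}$, to distribute the conjugating element: $(w_L^a w_R)^{uv} = (w_L^a)^{uv} (w_R)^{uv} = w_L^{auv} w_R^{uv}$. Since $w_L$ and $w_R$ are, by hypothesis, already written as products of conjugates of positive letters, say $w_L = \prod_j x_{i_j}^{d_j}$ and $w_R = \prod_k x_{l_k}^{e_k}$, conjugating by $auv$ and $uv$ respectively yields $w_L^{auv} = \prod_j x_{i_j}^{d_j auv}$ and $w_R^{uv} = \prod_k x_{l_k}^{e_k uv}$, which is again a product of conjugates of positive letters. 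This proves the displayed factorization of $w$.

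The remaining, and genuinely delicate, point is the effectivity statement: that $u$ and $v$ can be computed from $w$ and the word $a^{-1} w_L a w_R$ (call the latter $W$), given only that $w$ and $W$ are conjugate in $F(X)$. Here I would proceed in two stages. First, extract $v$ and $\bar w$ from $w$ alone: cyclic reduction of a word in a free group is a standard linear-time procedure — repeatedly delete inverse pairs formed by the first and last letters — and it produces $v$ and $\bar w$ with $w = v^{-1}\bar w v$ and $\bar w$ cyclically reduced. Second, I must recover $u$ with $\bar w = (w_L^a w_R)^u = W^u$ after freely reducing $W$; equivalently, writing $\overline{W}$ for the cyclic reduction of $W$ with $W = s^{-1}\overline{W} s$, I need a conjugator between the two cyclically reduced words $\bar w$ and $\overline{W}$. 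Two cyclically reduced words in a free group are conjugate if and only if one is a cyclic rotation of the other, and the rotation can be found by searching the positions of $\overline{W}$ inside $\overline{W}\,\overline{W}$ for an occurrence of $\bar w$ — this is exactly the "cyclically permute until it matches letter for letter" step invoked in {\scshape FactorQP}. If $\overline{W} = pq$ and $\bar w = qp$, then $\bar w = \overline{W}^{\,p}$, so $u = s^{-1} p$ works (one checks $W^u = (s^{-1}\overline{W}s)^{s^{-1}p} = \overline{W}^{\,p} = \bar w$). Assembling the pieces, $u = s^{-1} p$ and $v$ are both produced by explicit finite computations.

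The main obstacle I anticipate is bookkeeping around free reduction and the hypothesis itself: the words $w_L^a w_R$ and $a^{-1} w_L a w_R$ agree as group elements but the statement mixes the two, so I should be careful that "$a^{-1} w_L a w_R$" is interpreted as the unreduced word whose free reduction equals $w_L^a w_R$, and that the conjugacy hypothesis is between the group elements, not the literal strings. Once that is pinned down, the identity is a one-line application of the conjugation laws already used in Lemma~\ref{L:abelianize}, and the effectivity reduces to two well-known free-group routines (cyclic reduction and cyclic-subword search), so no step requires more than routine verification. I would close by remarking that since $w_L$ and $w_R$ are supplied already factored by the recursion in {\scshape TestQP}, the output is literally a product of conjugates $x_{i_j}^{c_j}$ with each $x_{i_j} \in X$, matching the form in Lemma~\ref{L:abelianize}.
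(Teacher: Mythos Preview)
Your proposal is correct and follows essentially the same route as the paper: compute $v$ and $\bar w$ by cyclic reduction of $w$, freely and cyclically reduce $a^{-1}w_L a w_R$, match the two cyclically reduced words by a cyclic rotation found via string search in the doubled word (the paper names Knuth--Morris--Pratt explicitly), and then distribute conjugation across the given factorizations of $w_L$ and $w_R$. Your explicit formula $u = s^{-1}p$ is in fact slightly more careful than the paper's terse assertion that $u$ can be taken to be ``a subword of $\bar w$'', which tacitly assumes $a^{-1}w_L a w_R$ is already cyclically reduced.
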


\begin{proof} The word $v$ can be found in linear time by successively
checking whether or not the initial and terminal letters of $w$ are equal
and $|w| > 1$.  The element $\bar{w}$ is defined as the freely reduced
word obtained from $vwv^{-1}$.  Suppose that $w$ and $w_L^aw_R$ are
conjugate.  Freely and cyclically reduce $w_L^aw_R$ to obtain a word
$z$.  It follows that $\bar{w}$ and $z$ are related by a cyclic
permutation.  By the Knuth-Pratt-Morris string matching algorithm, we
can find a subword $u$ of $\bar{w}$ such that $\bar{w} = (w_L^aw_R)^{u}$
in $F(X)$.  It then follows that \[w = \bar{w}^v = (w_L^aw_R)^{uv} =
w_L^{auv} w_R^{uv}.\] By distributing the exponents across the given
factorizations of $w_L$ and $w_R$, a factorization of $w$ as a product
of conjugates of positive letters is obtained.  \end{proof}

\section{Worst-case runtime analysis of the {\scshape TestQP} algorithm}

The {\scshape TestQP} algorithm takes a word of length $n$ as input and
returns yes if the word represents a quasi-positive element and no if it
does not.  The algorithm works as follows: \bigskip

\begin{tabular}{ll} 1. input word of length $n$&		(let
$f(n)$ be worst-case runtime)\\ 2. find the first negative letter&
(runtime $Cn$)\\ 3. find a matching positive letter&	(runtime
$C(n-1)$)\\ 4. check whether subwords are quasi-positive& (runtime $f(k)
+ f(n-k-2)$)\\ 5. repeat 3 and 4 until no more matches&		(leads
to a sum of $f(k) + f(n-k-2)$) \end{tabular} \bigskip

From the above, we see that \[f(n) \leq \sum_{k=0}^{n-2} (f(k) + f(n-k))
+ C(2n-1).\] We can rewrite this as follows, using the fact that
$f(0)=0$, by replacing the constant $C$ by a larger constant, and by
re-writing the sum: \[f(n) \leq 2\left(\sum_{k=1}^{n-2} f(k)\right) +
Cn\] Assuming equality, we have following recurrence relation: \[f(n+1)
- f(n) - 2f(n-1) = C.\] This is a linear first order non-homogeneous
recurrence relation with characteristic equation $x^2 - x - 2 = 0$.  The
following general solution is obtained by observing that $f(0) = 0$
and $f(1) = C$ and that $f(n) = -C/2$ is a particular solution: \[f(n) =
\frac{2C}{3}2^n - \frac{C}{6}(-1)^n - \frac{C}{2}.\] Therefore, the
{\scshape TestQP} algorithm has exponential worst-case runtime.

\section{A quasi-positive word with long {\scshape TestQP} runtime}
\label{MainExample} Consider the following word over $X = \{a,b\}$:

%\[w_k = [a,b]^k b^{k-1} ab,\] where $k > 0$ and $[a,b] =
%a^{-1}b^{-1}ab$.

\[u_k = ab[a,b]^kb^{k-1}.\]

\subsection{The word $u_k$ is quasi-positive}

If $k=1$, then $u_k = u_1 = ab[a,b] = aba^{-1} \cdot b^{-1}ab \cdot b$,
and so is quasi-positive. 

For $k > 1$, we have the following: \begin{equation} \begin{aligned} u_k
&= ab[a,b]^k b^{k-1}\\ &= aba^{-1} \cdot b^{-1} (ab [a,b]^{k-1} b^{k-2}
)b\\ &= b^{a^{-1}} \cdot u_{k-1}^{b}, \end{aligned} \end{equation} and
so $u_k$ is quasi-positive by induction on $k$ since $QP$ is closed
under products and under conjugation.

\subsection{Subwords of $u_k$}

\begin{remark} The initial $a^{-1}$ in $u_k$ cannot pair with any $a$
except the first $a$, because if it paired with some $a$ belonging to 
the product of commutator factors, then the left subword $w_L$ would have 
negative $b$-coordinate in $\Theta(w_L)$.  \end{remark}

\begin{remark} An element of the form $[a,b]^k b^j$, where $k \geq 1$
and $j \geq 0$ is never quasi-positive.  For if it were quasi-positive,
then in a spherical cancellation an arc pairing some $a^{-1}$ with some
$a$ would leave a subword of the form $b^{-1}ab[a,b]^na^{-1}b^{-1}$,
which abelianizes to $(0,-1)$ and so is not quasi-positive.
\end{remark}

\begin{lemma} The element $ab[a,b]^kb^{k-2}$ is not quasi-positive.
\end{lemma}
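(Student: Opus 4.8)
The plan is to prove, by induction on $k\ge 1$, that $v_k:=ab[a,b]^kb^{k-2}$ is not quasi-positive. For the base case $k=1$ I would argue directly: $v_1=ab[a,b]b^{-1}$ freely reduces to $aba^{-1}b^{-1}a$, a cyclically reduced word of length $5$ with $\Theta(v_1)=(1,0)$, so by Lemma~\ref{L:abelianize} quasi-positivity of $v_1$ would make it a single conjugate of $a$, hence conjugate to $a$ --- which is impossible, since conjugates of $a$ are cyclically reduced of length $1$. For the inductive step the idea is to peel one commutator off the front of $v_k$, mirroring the identity $u_k=b^{a^{-1}}\cdot u_{k-1}^{b}$ used above for $u_k$, but read in the contrapositive direction through Lemma~\ref{L:one_negative_letter_suffices}.

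So let $k\ge 2$ and write $v_k=a\,b\,a^{-1}\,b^{-1}\,a\,b\,(a^{-1}b^{-1}ab)^{k-1}b^{k-2}$, a freely reduced word of length $5k$ whose occurrences of $a$ are in position $1$ and in positions $4j+1$ for $j=1,\dots,k$, one per commutator. Suppose $v_k$ is quasi-positive; applying Lemma~\ref{L:one_negative_letter_suffices} to the letter $a^{-1}$ in position $3$ yields an occurrence of $a$, say in position $p$, such that cutting the cyclic word along this pair into $a^{-1}\,w_L\,a\,w_R$ gives quasi-positive subwords $w_L$ and $w_R$. If $p=4j+1$ for some $j$, then $w_L$ is the string of letters in positions $4,\dots,4j$; this equals $b^{-1}$ when $j=1$ and $b^{-1}ab(a^{-1}b^{-1}ab)^{j-2}a^{-1}b^{-1}$ when $j\ge 2$, so in every case $\Theta(w_L)=(0,-1)$, contradicting the fact (Lemma~\ref{L:abelianize}) that a quasi-positive element has non-negative abelianization. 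Hence $p=1$, and $w_L$ is the string of letters in positions $4,\dots,5k$, namely \[w_L=b^{-1}ab(a^{-1}b^{-1}ab)^{k-1}b^{k-2}=b^{-1}\bigl(ab[a,b]^{k-1}b^{k-3}\bigr)b=v_{k-1}^{b}.\] Conjugating the given factorization of $w_L$ by $b^{-1}$ then writes $v_{k-1}$ as a product of conjugates of positive letters, contradicting the inductive hypothesis; therefore $v_k$ is not quasi-positive.

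The step I expect to require the most care is this case analysis: one must verify that pairing the position-$3$ letter $a^{-1}$ with any occurrence of $a$ lying inside the commutator factors, rather than with the leading $a$, always leaves a left subword of negative $b$-exponent, which is exactly what forces the leading $a$ to be matched with the first $a^{-1}$. This is the same obstruction recorded in the two Remarks preceding the lemma, now invoked one level at a time inside the recursion. Once it is established, identifying $w_L$ with $v_{k-1}^{b}$ and using closure of quasi-positivity under conjugation finishes the argument.
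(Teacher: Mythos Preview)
Your argument is correct but proceeds differently from the paper's.  The paper gives a direct, non-inductive proof: assuming a spherical cancellation diagram exists, it focuses on the initial \emph{positive} letter $a$, rules out its being joined to an inner $a^{-1}$-circle (since deleting it would leave a cyclic conjugate of $[a,b]^kb^{k-1}$, dismissed by Remark~2), and then observes that joining it to an $a^{-1}$ in a commutator yields a subword $b[a,b]^{i}$, again dismissed by Remark~2.  You instead induct on $k$, apply Lemma~\ref{L:one_negative_letter_suffices} to the first \emph{negative} letter $a^{-1}$ (position~3), and run the dual case analysis.  The two case analyses correspond --- pairing the first $a^{-1}$ with the various $a$'s is the same cut as pairing the initial $a$ with the various $a^{-1}$'s --- but your inductive framing handles the boundary case cleanly: when the position-3 $a^{-1}$ pairs with the leading $a$ you obtain $w_L=v_{k-1}^{\,b}$ and finish by the inductive hypothesis, whereas in the paper's formulation the corresponding cut (initial $a$ with the first $a^{-1}$, giving the subword $b$) leaves a remainder conjugate to $v_{k-1}$, which Remark~2 alone does not cover.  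Your explicit base case $k=1$, settled by $\Theta(v_1)=(1,0)$ together with the cyclic-length obstruction to being conjugate to $a$, is also a clean addition that the paper's direct argument does not isolate.
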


\begin{proof} The factor $[a,b]^k$ is not quasi-positive.  Therefore,
any spherical cancellation diagram for the word $ab[a,b]^kb^{k-2}$ must
have an arc joining the initial $a$ to some $a^{-1}$ in a commutator.
(Otherwise, deleting the initial $a$ would produce a
quasi-positive word.)  The following left subword results: $w_L =
b[a,b]^i$, for some $i \geq 0$.  But this word is not quasi-positive by
Remark 2.  \end{proof}

\begin{lemma} The only way to realize $b^{-1}u_kb$ as a quasi-positive
element is to pair the outer pair $b^{-1}$, $b$ in a spherical
cancellation diagram.  \end{lemma}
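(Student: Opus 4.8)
The plan is to enumerate every occurrence of the letter $b$ in the cyclic word $w:=b^{-1}u_kb=b^{-1}ab[a,b]^kb^k$ and to show that the initial $b^{-1}$ can be paired, in a spherical cancellation diagram, with exactly one of them — namely the terminal $b$ — so that both resulting subwords are quasi-positive. Since $\Theta(w)=(1,k)$, the base letters of any quasi-positive factorization of $w$ consist of a single $a$ and $k$ copies of $b$, so every auxiliary boundary circle carries a \emph{negative} label; hence the arc leaving the vertex of the initial $b^{-1}$ must terminate at another vertex labelled $b$ on the $w$-curve itself. Cutting along that arc splits the cyclic word, exactly as in the proof of Lemma~\ref{L:one_negative_letter_suffices}, into the subword $w_L$ running from the $b^{-1}$ forward to the chosen $b$ and the complementary subword $w_R$, and produces spherical cancellation diagrams for each; thus both $w_L$ and $w_R$ are quasi-positive. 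It therefore suffices to go through the three families of $b$'s in $w$ — the $b$ of the prefix $ab$, the $b$ closing the $i$-th commutator factor ($1\le i\le k$), and the $m$-th $b$ of the tail block $b^k$ ($1\le m\le k$) — and, in each case other than the terminal one, to exhibit a subword that is not quasi-positive.

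For the $b$ of $ab$ one has $w_L=a$ and $w_R=[a,b]^kb^k$, which is not quasi-positive by Remark~2. For the $b$ closing the $i$-th commutator with $i<k$ one has $w_R=[a,b]^{k-i}b^k$, again excluded by Remark~2. For the $m$-th tail $b$ with $m<k$ one has $w_R=b^{k-m}$ (positive) and $w_L=ab[a,b]^kb^{m-1}$ with $m-1\le k-2$; since the preceding lemma gives that $ab[a,b]^kb^{k-2}$ is not quasi-positive, and the quasi-positive elements are closed under right multiplication by the positive letter $b$ ($QP$ being closed under products), downward induction shows $ab[a,b]^kb^{j}$ is not quasi-positive for every $0\le j\le k-2$, so $w_L$ is not quasi-positive. (For $k=1$ the two families just discussed are empty.) For the terminal $b$, i.e.\ $m=k$, one gets $w_L=ab[a,b]^kb^{k-1}=u_k$, which is quasi-positive, and $w_R=1$; this is the outer pair.

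The one genuinely delicate case is the $b$ closing the $k$-th commutator: here $w_R=b^k$ is positive, while $w_L=ab[a,b]^{k-1}a^{-1}b^{-1}a$ has $\Theta(w_L)=(1,0)$, so neither Remark~2 nor a sign obstruction applies. I would dispose of it as follows: by Lemma~\ref{L:abelianize} a quasi-positive word with abelianization $(1,0)$ is a single conjugate of $a$, hence conjugate in $F(X)$ to $a$; but $ab[a,b]^{k-1}a^{-1}b^{-1}a$ is reduced with first and last letters both $a$, so it is already cyclically reduced, and it has length $4k+1>1$, whence by uniqueness of the cyclically reduced representative of a conjugacy class in a free group it is not conjugate to $a$. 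Thus $w_L$ is not quasi-positive and this pairing is impossible. Running over all four cases, the arc from the initial $b^{-1}$ can only reach the terminal $b$, which proves the lemma. The main obstacle is precisely this last case: it is the only one where neither side is ruled out by an abelianization obstruction, so one must bring in the structural content of Lemma~\ref{L:abelianize} together with the normal form for conjugacy classes in a free group; the rest of the argument is bookkeeping — correctly locating $w_L$ and $w_R$ for each $b$ and recognizing them as instances of Remark~2 or of the preceding lemma.
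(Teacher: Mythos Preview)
Your proof follows the same case-by-case strategy as the paper: enumerate the occurrences of $b$ in $b^{-1}u_kb$ and show that pairing the initial $b^{-1}$ with any of them except the terminal $b$ leaves a non-quasi-positive subword. Your treatment is in fact more careful than the paper's on two points. First, the paper's Case~2 claims that pairing with any commutator $b$ gives a right subword $[a,b]^jb^k$ ruled out by Remark~2, but when the chosen $b$ lies in the \emph{last} commutator one gets $j=0$ and $w_R=b^k$, which is positive, so Remark~2 does not apply; you correctly isolate this case and dispose of it by observing that $\Theta(w_L)=(1,0)$ would force $w_L$ to be a conjugate of $a$, contradicting that $w_L=ab[a,b]^{k-1}a^{-1}b^{-1}a$ is cyclically reduced of length $4k+1>1$. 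Second, for the tail $b$'s the paper only cites the preceding lemma, which literally handles the exponent $k-2$; your downward induction (quasi-positive words are closed under right multiplication by $b$) supplies the missing step for exponents $j<k-2$. Both additions are necessary and your arguments for them are correct.
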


\begin{proof} We argue by
contradiction.  Assume that the initial $b^{-1}$ 
can be joined by an arc to some $b$ in $u_k$.  There are three cases to
consider.  

Case 1:  If the initial $b^{-1}$ is joined to the first $b$ in $u_k$,
then the right subword is $[a,b]^kb^{k-1}$, which is not quasi-positive
by Remark 2 above. 

Case 2:  If the initial $b^{-1}$ is joined to some $b$ in a commutator
factor of $u_k$, then  the right subword has the form $[a,b]^jb^{k}$, 
which is not quasi-positive by Remark 2 above. 

Case 3:  Assume the initial $b^{-1}$ is joined to some $b$ in the last
factor of the form $b^{k-1}$ of $u_k$, but not to the last such b. Then,
the left subword has the form $ab[a,b]^{k-1}b^j$ for some $j < k-2$, and
so is not quasi-positive by Lemma 1 above.  \end{proof}

\subsection{Recursive structure of {\scshape TestQP}$(u_k)$}

When {\scshape TestQP}$(u_k)$ is called, the algorithm attempts to pair 
the initial $a^{-1}$ with each occurrence of $a$, of which there are
$k+1$.  By Remark 1 above, only
the first $a$ will be a good match.  Thus, the algorithm has called
itself $k+1$ times.  The resulting left subword is $b^{-1}u_{k-1}b$.  By
Lemma 2 above, only the final $b$ will pair with the $b^{-1}$.  When
this good match is made, the resulting left subword is $u_{k-1}$,  
revealing the recursive structure of the work $u_k$. 

The above analysis suggests that the runtime of {\scshape
TestQP}$(u_k)$ is exponential in $k$.  On the other hand, the above
analysis shows that if the algorithm is modified so that the input is
first cyclically reduced and the abelianization is computed and checked,
then the runtime changes to quadratic.  In more
detail, suppose that the {\scshape TestQP} algorithm is modified so that
in between lines 1 and 2, it first cyclically reduces the input and
then computes $\Theta(w)$ (both of which can be done in linear time).
A line is added so that if $\Theta(w)$ has a negative coordinate, then 
the algorithm immediately returns false.  After these additions, 
{\scshape TestQP}$(u_k)$ will run in quadratic time.  This is because
when we check to see if a positive letter $a$ is a good match for the
initial occurrence of $a^{-1}$, we can reject the bad matches using the
abelianization map.  So, there will be $k$ checks
(with linear run time) and one final check which finds a good match.  The
resulting subwords are $w_L = b^{-1}u_{k-1}b$ and $w_R = b$.  The runtime
(after cyclically reducing $w_L$) will be linear in $k-1$ by induction.
The total runtime will be on the order of $k + (k-1) +  \cdots + 1$, which
is quadratic in $k$.

\begin{center} \begin{figure} \begin{tikzpicture}[scale=.4] \path (-6,0)
node[anchor = east] {$b$} (-5,2) node[anchor = south east] {$b$} (-2,5)
node[anchor = south east] {$a$} (0,6) node[anchor = south] {$b$} (2,5)
node[anchor = south west] {$a^{-1}$} (5,2) node[anchor = south west]
{$b^{-1}$} (6,0) node[anchor = west] {$a$} (5,-2) node[anchor = north
west] {$b$} (2,-5) node[anchor = north west] {$a^{-1}$} (0,-6)
node[anchor = north] {$b^{-1}$} (-2,-5) node[anchor = north east] {$a$};
\draw[thick, rounded corners = 8pt] (-6,0) -- (-5,2) -- (-2,5) -- (0,6)
-- (2,5) -- (5,2) -- (6,0) -- (5,-2) -- (2,-5) -- (0,-6) -- (-2,-5) --
(-5,-2) -- cycle; \node at (-6,0) [circle, inner sep = 2pt, draw =
black, fill = black] {}; \node at (-5,2) [circle, inner sep = 2pt, draw
= black, fill = black] {}; \node at (-2,5) [circle, inner sep = 2pt,
draw = black, fill = black] {}; \node at (0,6) [circle, inner sep = 2pt,
draw = black, fill = black] {}; \node at (2,5) [circle, inner sep = 2pt,
draw = black, fill = black] {}; \node at (5,2) [circle, inner sep = 2pt,
draw = black, fill = black] {}; \node at (6,0) [circle, inner sep = 2pt,
draw = black, fill = black] {}; \node at (5,-2) [circle, inner sep =
2pt, draw = black, fill = black] {}; \node at (2,-5) [circle, inner sep
= 2pt, draw = black, fill = black] {}; \node at (0,-6) [circle, inner
sep = 2pt, draw = black, fill = black] {}; \node at (-2,-5) [circle,
inner sep = 2pt, draw = black, fill = black] {};
     
     \draw[thick, red] (-6,0) .. controls (-1,-1) and (-1,-1) .. (0,-6);
\draw[thick, red] (-5,2) -- (5,2); \draw[thick, red] (-2,5) .. controls
(-1,2) and (1,2) .. (2,5); \draw[thick, red] (6,0) .. controls (0,2) and
(-2,0) .. (2,-5);
     
     \draw[thick, blue] (0,4) circle [radius = .9]; \draw[thick, blue]
(3,-1) circle [radius = .9];  \draw[thick, blue] (-2.5,-2.5) circle
[radius = .9];
     
     \node at (0,4) {$b^{-1}$}; \node at (-2.5,-2.5) {$a^{-1}$}; \node
at (3,-1) {$b^{-1}$};

     \node at (0,4.9) [circle, inner sep = 2pt, draw = black, fill =
black] {}; \node at (-2.5,-3.4) [circle, inner sep = 2pt, draw = black,
fill = black] {}; \node at (3.9,-1) [circle, inner sep = 2pt, draw =
black, fill = black] {};
     
\end{tikzpicture} \caption{A spherical cancellation diagram for $u_2 =
ab[a,b]^2b$.  We can recover a factorization as a product of conjugates
of positive letters as follows: choose, as a base point, the midpoint of
the edge joining the first and last letters of $u_2$; then join each
vertex on an inner circle by a transverse arc to the base point the
letters joined by the arcs record the conjugating elements and the inner
circles, (the inverse of) the base letters.  In this case: $u_2 =
b^{a^{-1}}b^{ab^{-1}} (a^{-1})^{b^{-2}}$.} \end{figure}
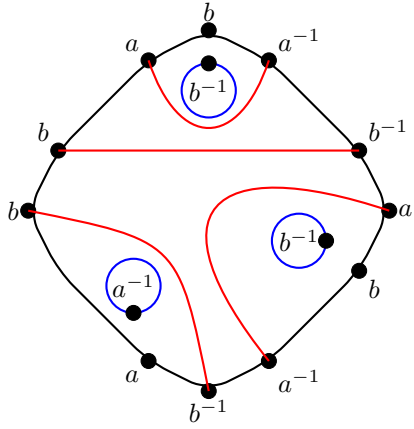 \end{center}

\section{Brute force algorithm for checking quasi-positivity in a free
group}

Given $w \in F(X)$, we can search for a
factorization as a product of conjugates of positive basis elements by
the following
brute force approach.  From $\Theta(w)$, we know exactly the number of
each basis element that will appear in a factorization as a base $w_i$
as below: \[w = w_1^{c_1} \cdots w_k^{c_k},\] where $w_1, \dots, w_k \in
X$ and $k$ is the sum of the coordinates of $\Theta(w)$.  There are only
finitely many such $c_i$.  To see this, observe that if such a
factorization exists, then there is a spherical cancellation diagram
with outer circle labelled by $w$ and inner circles labelled by
$w_1^{-1}, \dots, w_k^{-1}$.  Select the midpoint of the first edge (the
one incident to the vertex corresponding to the initial letter of $w$)
of the boundary circle labeled by $w$ as a base point.  If we join the
base point to the vertex on $w_i$ by an arc transverse to the arcs in the
diagram, then by reading the labels of the arcs that are crossed, we can
write down the letters of $c_i$.  Because the arcs of the diagram join
pairs of vertices of $w$ with inverse labels or join vertices of
positive letters of $w$ to a corresponding inverse letter on an inner
boundary circle, we see that \[|c_i| \leq \frac{|w| - k}{2}.\] Thus,
there are only finitely many such conjugating elements.  And we can
check using the linear time solution to the word problem in $F(X)$
whether or not $w$ is equal to any expression of this form.  This gives
an exponential bound in $|w|$ to the complexity of deciding if $w$
is quasi-positive.

\bigskip

{\bf Remark:} One can ask if there are decision problems about finitely
presented groups which have complexity greater than exponential time.
The following construction shows this is possible for countable groups.
Let $S$ be a recursive subset of $\mathbb{Z}$.  For each $n \in
\mathbb{Z}$, let $G_n = \langle n \mid n \rangle \cong \{1\}$ if $n \in
S$ and $G_n = \langle n \mid \quad \rangle \cong \mathbb{Z}$ if $n
\notin S$.  Let $G$ be the free product of the $\{G_n\}$.  The decision
problem of whether or not a word over $\mathbb{Z}$ represents the
trivial element of $G$ has complexity at least that of the complexity of
deciding whether or not an integer belongs to $S$.  Since $G$ is
recursively presented, it is a subgroup of a finitely presented group.
The membership problem for this subgroup is a decision problem about a
finitely presented group which is at least as complex as the decision
problem for an integer belonging to $S$.

\subsection{Brute force approach applied to $u_k$}

If the brute force approach described above is applied to the word
$u_k$ in section~\ref{MainExample}, where $|u_k| = 5k+1$, we see that 
one needs to determine whether or not $u_k$ is equal (as a cyclic word) to
some word of the form \[a^{c_0}b^{c_1} \cdots b^{c_k} = a^{c_0}b^{c_1
\cdots c_k},\]
where each $|c_i| \leq (|u_k| - (k+1)/2 = 2k$.  This is a very large
collection of words.  The ball of radius $r \geq 1$ in the rank 2 free
group is $1 + 2(3^r - 1)$.

{\bf Remark:} Note that checking
equality in the free group is basically free since the size of the set
$\{c_i\}$ is growing like $2^k$.

\section{Orevkov's method} \label{S:OrevkovMethod}
Stepan Yu.~Orevkov described an algorithm which recognizes quasi-positive
elements of the free group in~\cite{Orevkov_MR2056762}
We describe his methods here and compare them to the topological
methods used in this paper.

\subsection{Quasi-positivity in a free group}
Orevkov recursively defines a regular bracket
structure (RBS) as follows: the empty word and $*$ are RBSs; if $a$
and $b$ are RBSs, then so are $[a]$ and $ab$.  Thus, the set of RBSs,
denoted $\mathcal{R}$, is a subset of the free monoid on the alphabet
$\{[,*,]\}$.  The element $*$ is called star, and the elements $[$ and
$]$ are called brackets.  A pair of brackets match if they arise from
the rule $a \in \mathcal{R} \to  [a] \in \mathcal{R}$.

Suppose that $w = x_1^{\epsilon_1} \cdots x_n^{\epsilon_n}$ is a word 
over $X$, where $\epsilon_i \in \{\pm 1\}$ for $i=1, \dots, n$.  A RBS
$u_1 \cdots u_n \in \mathcal{R}$ is said to agree with $w$ provided

\begin{enumerate}[nosep] 

\item[(1)] if $u_i = *$, then $x_j \in X$, i.e. stars match positive
letters, and 

\item[(2)] if $u_i$ and $u_j$ are matching brackets, then $x_i$ and
$x_j$ are inverse letters.  

\end{enumerate}

\smallskip

We can now state Orevkov's theorem:

\begin{theorem*}[Orevkov, Proposition 1.1 in~\cite{Orevkov_MR2056762}]
\label{T:OrevkovQPFree} 

A word $w$ over $X$ represents a quasi-positive element of the free group
$F(X)$ if and only if there is a regular bracket structure which agrees
with $w$.

\end{theorem*}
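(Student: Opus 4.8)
The plan is to exploit the equivalence between quasi-positivity and the existence of a spherical cancellation diagram (Gersten's theorem together with Lemma~\ref{L:abelianize}), and to show that such diagrams are essentially the same data as regular bracket structures agreeing with $w$. Suppose first that $w$ is quasi-positive. By Lemma~\ref{L:abelianize} and Gersten's theorem, there is a spherical cancellation diagram with one boundary circle $\Gamma_0$ subdivided into $|w|$ vertices labeled by the letters of $w$, and $k = |\Theta(w)|$ further boundary circles, each with a single vertex labeled by a negative letter. The arcs of the diagram either join two vertices of $\Gamma_0$ carrying inverse letters, or join a vertex of $\Gamma_0$ carrying a positive letter to the single vertex of an inner circle. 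I would cut the sphere open along an arc disjoint from all the cancellation arcs running from a point on the edge of $\Gamma_0$ incident to the initial letter around to itself; this yields a planar picture (a disk) in which $\Gamma_0$ becomes a segment with the letters of $w$ read left to right, and the cancellation arcs become a planar, non-crossing chord system. Each chord joining $x_i$ to $x_j^{-1}$ within $\Gamma_0$ becomes a matched pair of brackets placed at positions $i$ and $j$; each chord running to an inner circle becomes a star at the positive-letter position it meets. Because the chords are pairwise disjoint and properly embedded in the disk, the resulting word over $\{[,*,]\}$ is correctly nested, i.e. it is a regular bracket structure, and by construction it agrees with $w$.

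Conversely, suppose some RBS $u = u_1\cdots u_n \in \mathcal{R}$ agrees with $w$. I would reverse the construction: realize $w$ as the boundary circle $\Gamma_0$, and for each matched bracket pair $(u_i, u_j)$ draw a chord joining the vertices $i$ and $j$ inside the disk bounded by $\Gamma_0$; the nesting property of RBSs guarantees these chords can be drawn pairwise disjoint. For each star $u_i = *$, the letter $x_i$ is positive; introduce one new inner boundary circle carrying the single vertex $x_i^{-1}$ and an arc joining it to the vertex $i$ of $\Gamma_0$. Since the star positions are all distinct and lie "outside" every bracket chord in the planar picture (the complement of the bracket chords in the disk is connected along $\Gamma_0$ in a way that lets each star-arc reach a fresh inner disk without crossings), these arcs can also be made disjoint from everything else. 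Capping off the outside of $\Gamma_0$ to return to the sphere, we obtain a spherical cancellation diagram for the sequence $w, x_{i_1}^{-1}, \dots$; by Gersten's theorem, $w\cdot(\text{product of the }x_{i_\ell}^{-1}\text{ conjugates}) = 1$, hence $w$ is a product of conjugates of positive letters, i.e.\ quasi-positive (with the convention that the identity is quasi-positive, covering the case of no stars).

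The main obstacle is the planarity bookkeeping in both directions: one must argue carefully that a non-crossing chord diagram on a circle is \emph{exactly} a well-nested bracket word, and that the star-arcs to the inner circles can always be routed without crossings. In the forward direction this requires observing that cutting the sphere along a suitable arc turns the disjoint proper arcs of the cancellation diagram into a planar chord system on a segment, where "no two arcs cross" translates precisely into "brackets are properly nested" — this is the standard correspondence between Dyck words and non-crossing matchings, but it must be stated in a way that simultaneously accommodates the star-arcs, which are not matchings among $\Gamma_0$-vertices but tethers to auxiliary circles. In the reverse direction the subtlety is dual: one must check that the RBS nesting constraint, which only constrains brackets among themselves and forces stars onto positive letters, is enough to guarantee that \emph{all} the arcs (bracket chords \emph{and} star-tethers) can be simultaneously realized disjointly on the sphere. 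I expect this to follow by inducting on the RBS structure — peeling off an outermost bracket pair $[a]$ gives a subdiagram on the circle for the subword between the brackets, and concatenation $ab$ corresponds to placing two subdiagrams side by side — but writing the induction cleanly, with the inner circles correctly distributed, is where the real work lies.
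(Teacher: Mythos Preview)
Your proposal is correct and lands on the same underlying picture as the paper---both directions go through spherical cancellation diagrams---but the forward implication is organized differently. The paper argues by induction on the number of $\Gamma_0$--to--$\Gamma_0$ arcs: cut along one such arc to split $w = a x^{\epsilon} b x^{-\epsilon} c$ into shorter quasi-positive words $b$ and $ca$, obtain RBSs for these by induction, assemble $[p]q$ agreeing with $x^{\epsilon} b x^{-\epsilon} ca$, and then invoke an auxiliary lemma (Lemma~\ref{L:cycleRBS}) that RBS-agreement is preserved under cyclic permutation to recover an RBS for $w$ itself. Your route is instead global: fix the basepoint on $\Gamma_0$, read the whole non-crossing chord system at once, and appeal to the standard bijection between non-crossing matchings on a marked circle and balanced bracket words, with the $\Gamma_0$--to--inner-circle arcs supplying the stars. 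This avoids the cycling lemma entirely, at the cost of having to articulate carefully that ``disjoint properly embedded arcs in the planar region'' translates exactly into ``well-nested brackets with stars interspersed''---which, as you note, is the Dyck-word correspondence plus a check that star-tethers can coexist disjointly with the bracket chords (they can, since each star vertex lies on the boundary of a unique complementary face). Your reverse direction is essentially identical to the paper's: both build the diagram on a disk by drawing chords for innermost bracket pairs and then puncturing near each star vertex. One small wording issue: your phrase about ``cutting the sphere open along an arc \ldots\ around to itself'' is unnecessary, since a sphere with $k+1$ boundary circles is already a disk with $k$ holes; all you need is to fix the basepoint on $\Gamma_0$ to linearize it.
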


Here is a proof of Orevkov's theorem using the methods of this paper.
Suppose that $w = x_1^{\epsilon_1} \cdots x_n^{\epsilon_n}$ represents a
quasi-positive element of $F(X)$.  Then there is a spherical
cancellation diagram for $w$, $\Theta(w)$, where $\Theta:F(X) \to
\mathbb{Z}^{X}$ is the abelianization homomorphism.  If $w$ is positive,
then $u = * \cdots *$ agrees with $w$.   Otherwise, we have that $w =
ax^{\epsilon}bx^{-\epsilon}c$ such that the letters $x^{\pm \epsilon}$
are joined by an arc in the diagram.  Cutting along this arc, we have that
both $b$ and $ca$ are quasi-positive.  We need the following:

\begin{lemma} \label{L:cycleRBS} 

If a word over $X$ agrees with some RBS, then so does any cyclic
conjugate of this word.

\end{lemma}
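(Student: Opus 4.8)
The plan is to prove Lemma~\ref{L:cycleRBS} by showing that it suffices to handle a single ``elementary'' cyclic move --- transporting the first letter of the word to the end --- and that such a move can be mirrored by a corresponding transformation of an agreeing RBS. Since every cyclic conjugate of a word $w = x_1^{\epsilon_1}\cdots x_n^{\epsilon_n}$ is obtained by iterating this elementary move, it is enough to check one step. So first I would fix a word $w$ and an RBS $u = u_1 \cdots u_n \in \mathcal{R}$ that agrees with $w$, and set $w' = x_2^{\epsilon_2}\cdots x_n^{\epsilon_n} x_1^{\epsilon_1}$; the goal becomes the construction of an RBS $u'$ that agrees with $w'$.

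The construction of $u'$ splits into two cases according to the symbol $u_1$. If $u_1 = *$, then by condition (1) $x_1 \in X$, so $x_1^{\epsilon_1}$ is a positive letter; in $w'$ this letter still needs a star, and the symbols $u_2 \cdots u_n$ already form an RBS in their own right (the word $u$ restricted past an initial star is still a well-formed bracket structure, since a leading $*$ cannot be half of a matched bracket pair). Thus $u' = u_2 \cdots u_n *$ is an RBS, and it clearly agrees with $w'$ because the matching-bracket pairs among $u_2, \dots, u_n$ are unchanged and the new trailing $*$ sits over the positive letter $x_1^{\epsilon_1}$. The more delicate case is $u_1 = [$: then, since $u$ is regular, there is a unique index $m$ with $u_m = ]$ matching $u_1$, the block $u_2 \cdots u_{m-1}$ is itself an RBS, and by condition (2) $x_1^{\epsilon_1}$ and $x_m^{\epsilon_m}$ are inverse letters. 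Here the plan is to set $u' = u_2 \cdots u_{m-1}\, u_{m+1}\cdots u_n\, [\, u_2\cdots u_{m-1}\, ]$? --- no; rather the cleaner move is to observe that cyclically, the matched pair $u_1,u_m$ becomes a pair $u_m,u_1$ with the bracket ``wrapping around,'' which is not directly an RBS, so instead I would rotate the bracket: define $u'$ by placing, in positions corresponding to $x_2^{\epsilon_2},\dots,x_n^{\epsilon_n}$, the symbols $u_2,\dots,u_n$ \emph{except} that the matching $]$ at old position $m$ is replaced by $[$ (opening a bracket over $x_m^{\epsilon_m}$, which in $w'$ has the role formerly played by $x_1^{\epsilon_1}$), and then append a final $]$ in the last position over $x_1^{\epsilon_1}$; one checks this is a valid RBS because removing the outermost pair $u_1,u_m$ from $u$ and re-inserting a pair around the complementary segment is exactly the rule $a \mapsto [a]$ applied after a concatenation rearrangement, and it agrees with $w'$ because $x_m^{\epsilon_m}$ and $x_1^{\epsilon_1}$ are inverse, matching the new outer bracket pair, while all inner pairs are preserved.

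I expect the main obstacle to be the bookkeeping in the $u_1 = [$ case: verifying rigorously that the rearranged symbol string is genuinely a regular bracket structure (i.e.\ lies in $\mathcal{R}$ as generated by the two production rules, not merely that brackets are balanced) requires an induction on the structure of $u$, using that $u = [\,u^{(1)}\,]\,u^{(2)}$ with $u^{(1)}, u^{(2)} \in \mathcal{R}$ and that the target is $u^{(1)}\, u^{(2)}\, [\,\cdot\,]$-type reassembly --- and one must be careful that ``$u^{(1)}$'' there should actually be recombined with the part of $u^{(2)}$ that gets swept around. Once the single elementary rotation is handled, the general statement follows immediately by induction on the number of rotations, since an arbitrary cyclic conjugate is a finite composition of elementary ones. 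As a sanity check I would also note the degenerate subcases ($n = 1$, or $m = 2$ so that $u^{(1)}$ is empty) cause no trouble, since the empty word is an RBS.
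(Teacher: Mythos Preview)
Your approach is essentially the paper's: reduce to a single elementary cyclic shift and split on whether the moved symbol is a star or a bracket, with an explicit rewriting of the RBS in each case; the paper merely shifts in the opposite direction (last letter to the front rather than first to the end). Your anticipated ``bookkeeping'' obstacle in the bracket case is not a real difficulty: once $u_1 = [$ matches $u_m = ]$ you have $u = [\,u^{(1)}\,]\,u^{(2)}$ with $u^{(1)}, u^{(2)} \in \mathcal{R}$, and then your $u' = u^{(1)}\,[\,u^{(2)}\,]$ lies in $\mathcal{R}$ immediately by one application of each production rule---no further induction is needed.
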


\begin{proof} 

Suppose $u = u_1 \cdots u_n \in \mathcal{R}$ agrees with the
word $w = w'x^\epsilon$.  If $u_n = *$, then $u_n u_1 \cdots u_{n-1}$ 
agrees with $x^\epsilon w$.  If $u_n = ]$ and $u_i = [$ is the matching
bracket, then $[u_1 \cdots u_{i-1}]u_{i+1} \cdots u_{n-1}$ is an RBS
that agrees with $x^\epsilon w$.

\end{proof}

Continuing with the proof of Theorem~\ref{T:OrevkovQPFree}, we use
induction on the number of arcs, to conclude that there is an RBS $p$ 
which agrees with $b$ and an RBS $q$ which agrees with $ca$.  The
RBS $[b]q$ agrees with $x^\epsilon b x^{-\epsilon}ca$.  By
Lemma~\ref{L:cycleRBS}, there is an RBS which agrees with $w$.

Conversely, suppose that $w = x_1^{\epsilon_1} \cdots w_n^{\epsilon_n}$
is a word over $X$ and $u = u_1 \cdots u_n$ is an RBS which agrees with
$w$.  We can construct a spherical cancellation diagram for $w$ as
follows: let $D$ be an oriented disk with boundary circle subdivided
into $n$ vertices and arcs.  Choose a base vertex and assign the letters
of $w$ as labels to each of these vertices, starting with the base vertex 
and continuing in the order determined by the orientation.  If $u_i=[$
and $u_j=]$ is an innermost pair of matching brackets for some $1 \leq i
< j \leq n$, join the corresponding pair of vertices (that are labelled
by a pair of inverse letters) by an arc.  Deleting this pair of
brackets, find another innermost pair of matching brackets and again
add arcs to the diagram until all brackets are accounted for.  The
resulting arcs are disjoint.  The remaining vertices on boundary circle
are labelled by positive letters.  We can choose disjoint neighborhoods
around each such vertex and in each delete a small open disk.  The
resulting inner boundary circle is labelled by the corresponding inverse
letter and then joined by an arc to the corresponding outer vertex.  In
this way, a spherical cancellation diagram for $w$ is constructed.
Therefore, $w$ is quasi-positive.

\section{Acknowledgements}

The authors would like to thank Matt Hedden for sharing his interest in
quasi-positivity in braid groups.  It was these conversations which
inspired this project.  The first author would like to thank the
Department of Mathematics at the University of Michigan for support
during his stay as a Visiting Scholar from August 2017 until August
2018.  The first author also gratefully acknowledges support from the
Lyman Briggs College of Michigan State University for financial support
for travel between East Lansing and Ann Arbor, Michigan during this
period.

\bibliography{QP_free_rev3} \bibliographystyle{plain}

\end{document}